\newtheorem{theorem}{Theorem}
\newtheorem{corollary}[theorem]{Corollary}
\newtheorem{lemma}[theorem]{Lemma}
\def\etal{{\it et al.}~\cite}
\def\eref#1{$(\ref{#1})$}
\def\lref#1{Lemma~$\ref{#1}$}
\def\tref#1{Theorem~$\ref{#1}$}
\def\cyref#1{Corollary~$\ref{#1}$}
\renewcommand{\ge}{\geqslant}
\renewcommand{\le}{\leqslant}
\newcommand{\Z}{\mathbb{Z}}
\def\sym{{\cal S}}
\def\kee{\psi}
\def\ab{\psi_{+}}
\def\cyc{\psi_\circ}
\def\ev{E}
\def\id{\varepsilon}
\def \mod#1{{\:({\rm mod}\ #1)}}
\let\oldproofname=\proofname
\renewcommand{\proofname}{\rm\bf{\oldproofname}}
\title{Small Partial Latin Squares that\\
Cannot be Embedded in a Cayley Table}
\author{Ian M.\ Wanless\thanks{Research supported by ARC grant FT110100065.}\\
\small School of Mathematical Sciences\\[-0.8ex]
\small Monash University\\[-0.8ex]
\small Vic 3800 Australia\\
\small \texttt{ian.wanless@monash.edu}
\and
Bridget S. Webb\\
\small The Open University,\\[-0.8ex]
\small Milton Keynes, MK7 6AA,\\[-0.8ex]
\small United Kingdom\\
\small \texttt{bridget.webb@open.ac.uk}
}
\date{}
\begin{document}

\maketitle

\begin{abstract}
We answer a question posed by D\'enes and Keedwell that is equivalent to
the following.
For each order $n$ what is the smallest size of a 
partial latin square that cannot be embedded into the Cayley table
of any group of order $n$? We also solve some variants of this question
and in each case classify the smallest examples that cannot be embedded.
We close with a question about embedding of diagonal partial latin
squares in Cayley tables.
\end{abstract}


\section{Introduction}


A \emph{partial latin square} (PLS) is a matrix in which some cells may be 
empty and in which any two filled
cells in the same row or column must contain distinct symbols.  
In this work we will insist that each row and column of a PLS must
contain at least one filled cell.
The {\em size} of the PLS is the number of filled cells.
The {\em order} of the PLS is the maximum of its number of rows,
number of columns and the number of distinct symbols it contains.
We say that a PLS $P$ {\em embeds} in
a group $G$ if there is a copy of $P$ in the Cayley table of $G$.
Formally, this means there are
injective maps $I_1$, $I_2$ and $I_3$ from, respectively,
the rows, columns and symbols of $P$ to $G$ such that
$I_1(r)I_2(c)=I_3(s)$ whenever a symbol $s$ occurs in cell $(r,c)$
of $P$.

Let $\kee(n)$ denote the largest number $m$ such that for every
PLS $P$ of size $m$ there is some group of order $n$ in which $P$
can be embedded. Rephrased in our terminology, Open Problem 3.8 in 
\cite{DKI} asks for the value of $\kee(n)$. In this note we
solve this problem by showing:

\begin{theorem}\label{t:main}
\[
\kee(n)=\begin{cases}
1&\text{when }n=1,2,\\
2&\text{when }n=3,\\
3&\text{when }n=4,\text{ or when $n$ is odd and }n>3,\\
5&\text{when }n=6,\text{ or when }n\equiv2,4\mod 6\text{ and }n>4,\\
6&\text{when }n\equiv0\mod 6\text{ and }n>6.
\end{cases}
\]
\end{theorem}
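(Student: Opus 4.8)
The plan is to determine $\kee(n)$ by pinning down, for each $n$, the smallest size $s(n)$ of a PLS that embeds in no group of order $n$. Since an embedding of a PLS restricts to an embedding of every sub-PLS, deleting a filled cell can only make embedding easier, so non-embeddability is preserved under adding cells; hence every PLS of size less than $s(n)$ embeds and $\kee(n)=s(n)-1$. Thus for each $n$ there are two tasks: an \emph{upper bound}, exhibiting one explicit PLS of size $\kee(n)+1$ that embeds in no group of order $n$; and a \emph{lower bound}, showing that \emph{every} PLS of size $\kee(n)$ embeds in \emph{some} group of order $n$. I would exploit throughout that the group may be chosen depending on $P$ (for $n=4$ either of $\Z_4,\Z_2\times\Z_2$; for $n=6$ either of $\Z_6,\sym_3$; for other $n$ I expect $\Z_n$ to suffice in most cases), and that only finitely many cases arise, since a PLS of size $m\le 6$ has order at most $m$ and may be reduced modulo relabelling of rows, columns and symbols and transposition, which preserve embeddability since $G\cong G^{\mathrm{op}}$; more generally the conjugacy operations may be used, as a group's Cayley table is isotopic to each of its conjugates.

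The engine for the upper bounds is that each filled cell $(r,c,s)$ forces $I_1(r)I_2(c)=I_3(s)$, so any cycle through the filled cells forces a word in the labels to equal $\id$. I would isolate three generic obstructions. First, the \emph{intercalate} (the four cells of a $2\times2$ Latin subsquare) forces its column labels to differ by an involution, so it embeds if and only if $n$ is even; for odd $n>3$ this size-$4$ PLS gives $\kee(n)\le 3$. Secondly, the ``double transversal'' on a $3\times3$ grid, carrying symbol $a$ on the cells $(i,i)$ and a symbol $b\ne a$ on $(1,2),(2,3),(3,1)$, forces $I_3(b)I_3(a)^{-1}$ to have order $3$, so it embeds if and only if $3\mid n$; for even $n$ with $3\nmid n$ (that is $n\equiv2,4\mod 6$, $n>4$) this size-$6$ PLS gives $\kee(n)\le5$. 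For $n\equiv0\mod 6$ with $n>6$ both of these embed, and I would construct a size-$7$ PLS giving $\kee(n)\le 6$. Here the subtlety, and the hard part, is that no single element order or subgroup is missing from all groups of such an $n$ (for $n=12$ one must defeat each of $\Z_{12},A_4,D_{12},Q_{12}$ and $\Z_2^2\times\Z_3$ simultaneously), so this obstruction cannot merely ``demand'' a missing element and must be a genuinely combinatorial configuration whose failure is verified against every group of order $n$.

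The remaining upper bounds are the small exceptional orders, where the scarcity and small size of the groups furnish obstructions below the generic threshold: a size-$2$ PLS (two cells off the diagonal with distinct symbols, forcing two distinct labels to coincide in $\Z_2$) for $n=1,2$; a size-$3$ PLS for $n=3$; a size-$4$ PLS for $n=4$ that cannot be an intercalate, since both groups of order $4$ contain involutions; and a size-$6$ PLS for $n=6$ that must fail in both $\Z_6$ and $\sym_3$ despite each having elements of orders $2$ and $3$. Each of these I would produce explicitly and check by hand against the (few) groups of that order.

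For the lower bounds I would show that every PLS of size at most $\kee(n)$ embeds in some group of order $n$, by classifying all such PLS up to the equivalences above and exhibiting an embedding of each, typically into $\Z_n$, after checking that $\Z_n$ carries elements of the orders demanded by whatever cycles the configuration contains (an intercalate needs $2\mid n$, while no configuration of fewer than $6$ cells should force an order-$3$ element, consistent with the threshold rising only at sizes $6$ and $7$). The main obstacle is precisely this lower-bound analysis: one must rule out \emph{every} small configuration as a potential failure and, for each survivor, produce a concrete embedding while using the freedom to switch groups (crucially for $n=4$ and $n=6$) and ensuring $\Z_n$ or the alternative is large enough to separate the distinct rows, columns and symbols. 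Establishing the exceptional size-$7$ obstruction for $n\equiv0\mod 6$, and confirming that all size-$6$ PLS do embed there, is the most delicate step, and the one most likely to require a careful structural argument, or computer assistance, rather than a one-line verification.
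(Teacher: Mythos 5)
Your obstruction analysis for odd $n$ and for $n\equiv2,4\mod 6$ is sound (your intercalate and your ``double transversal'' are, up to species, the paper's row cycles $C_2$ and $C_3$), but the decisive gap is your upper bound for $n\equiv0\mod 6$, $n>6$. You correctly diagnose that no missing element order or subgroup can be exploited there, but you then leave the obstruction as something you ``would construct,'' whose failure is to be ``verified against every group of order $n$.'' That verification is not a finite task: the claim concerns infinitely many $n$, with different sets of groups for each, so no case check can close it. The idea your sketch is missing is the \emph{quadrangle criterion}: in the Cayley table of any group, if $r_1c_1=r_2c_2$, $r_1d_1=r_2d_2$ and $s_1c_1=s_2c_2$, then $s_1d_1=s_2d_2$, since $s_1d_1=(s_1c_1)(r_1c_1)^{-1}(r_1d_1)$. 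The size-$7$ PLS
\[
\left(
\begin{array}{ccc}
a&b&\cdot \\
c&a&b \\
\cdot&c&d
\end{array}
\right)
\]
violates this criterion (the three coincidences in symbols $a$, $b$, $c$ force $d=a$), so it embeds in \emph{no} group of \emph{any} order; this single configuration yields $\kee(n)\le6$ uniformly in $n$, which is exactly the bound your route cannot reach. The same issue affects $n=6$ in milder form: you assert that some size-$6$ PLS fails in both $\Z_6$ and $\sym_3$, but exhibiting one and proving its non-embeddability is real work (the paper's example requires a case analysis over the possible orders of one of its symbols, and was located by computer search, as none of the algebraic obstructions applies to it).

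The lower bounds have the same uniformity problem. ``Every PLS of size $\kee(n)$ embeds in some group of order $n$'' is, for each congruence class, a statement about infinitely many $n$, so ``classify the small PLS and exhibit an embedding of each, typically into $\Z_n$'' is not a proof until you supply a mechanism valid for all $n$ simultaneously. The paper does this with two extension lemmas of the form: if $P'$ (obtained from $P$ by deleting a suitable row) embeds in $G$ and $|G|$ exceeds an explicit function of the numbers of rows, columns and symbols of $P$, then $P$ embeds in $G$; these eliminate almost all species of size at most $7$ for all sufficiently large $n$, leaving a short finite list to treat by hand or by computer. Moreover, your heuristic that only cycles forcing element orders matter overlooks the diagonal configurations: to get $\kee(n)\ge5$ for $n\equiv2,4\mod 6$ you must in particular embed $T_5$ (five cells on a diagonal with five distinct symbols) in a group of order $n$, and for $6\mid n$, $n>6$ you must embed $T_6$. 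The paper imports the Brouwer--de Vries--Wieringa/Woolbright theorem that $T_t$ embeds in any group of order $n$ whenever $t\le\lceil n-\sqrt n\rceil$; nothing in your proposal covers these cases, and they are exactly the ones governed by the theory of partial transversals (and complete mappings) rather than by element orders.
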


We also consider an abelian variant.
Let $\ab(n)$ denote the largest number $m$ such that for every
PLS $P$ of size $m$ there is some {\em abelian} group of order $n$ in which $P$
can be embedded. We show that:

\begin{theorem}\label{t:abmain}
\[
\ab(n)=\begin{cases}
1&\text{when }n=1,2,\\
2&\text{when }n=3,\\
3&\text{when }n=4,\text{ or when $n$ is odd and }n>3,\\
5&\text{when $n$ is even and }n>4.
\end{cases}
\]
\end{theorem}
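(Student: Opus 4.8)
The plan is to establish, for each $n$, matching bounds $\ab(n)\ge m$ and $\ab(n)\le m$ with $m$ the asserted value, mirroring the proof of \tref{t:main}. Throughout I would eliminate the symbol map: writing $x_r=I_1(r)$ and $y_c=I_2(c)$, an embedding of a PLS $P$ into an abelian group $A$ is the same as an injective assignment of rows to values $x_r\in A$ and columns to values $y_c\in A$ such that (i) whenever one symbol occupies two cells $(r,c)$ and $(r',c')$ we have $x_r+y_c=x_{r'}+y_{c'}$, and (ii) the values $x_r+y_c$ are distinct on cells carrying distinct symbols. Thus each repeated symbol contributes one linear relation among the $x$'s and $y$'s, and embeddability reduces to solving a small linear system subject to three injectivity conditions.

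For the upper bounds the cleanest case is $n$ odd. Here the intercalate $\{(1,1,1),(1,2,2),(2,1,2),(2,2,1)\}$ cannot embed: its two repeated symbols force $x_1-x_2=y_2-y_1$ and $x_1-x_2=y_1-y_2$, hence $2(y_1-y_2)=0$ with $y_1\ne y_2$, impossible in any group of odd order, where doubling is injective. This gives $\ab(n)\le 3$ uniformly over all odd $n>3$. For even $n>4$ I would split on $n\bmod 6$. When $3\nmid n$ a single size-$6$ ``triangle'' on three columns $c_1,c_2,c_3$ and suitable rows, in which three symbols each appear twice, forces $3(y_1-y_2)=0$ with $y_1\ne y_2$; since no group of order $n$ then has an element of order $3$, this fails in every abelian group of order $n$, so $\ab(n)\le 5$. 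The residue $n\equiv0\mod 6$ is harder, because every abelian group of that order does contain elements of orders $2$ and $3$; here I would seek a size-$6$ configuration whose relations, using commutativity, force two distinct columns to receive the same value, an inconsistency present in every abelian group but removed once the defining products may be noncommutative. The boundary order $n=6$ is handled separately by a complete-mapping obstruction: a transversal of six distinct symbols is a complete mapping of the whole group, which neither $\Z_6$ nor the only nonabelian group of order $6$ admits. The remaining small orders $n\in\{1,2,3,4\}$ are settled by explicit examples tested against the one or two abelian groups of that order; for instance a two-cell diagonal with distinct symbols cannot embed in $\Z_2$, while in $\Z_3$ a three-cell diagonal in which two cells share a symbol forces the third symbol value to coincide with the shared one, since the three cell values sum to $0$.

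For the lower bounds I would classify, up to the usual paratopism (row, column and symbol permutations together with conjugacy), all PLS of the critical size---size $3$ for the odd orders and for $n=4$, size $5$ for even $n>4$---and embed each class. Since such a PLS has at most three (respectively five) filled cells it carries only a bounded number of repeated-symbol relations, so after choosing the free values generically in $\Z_n$ the only threat to distinctness is the handful of forced equalities, which are easily realised once $n$ is large. The point that makes $5$ attainable for even $n$ but not for odd is that $\Z_n$ then possesses both an element of order $2$, to realise any embedded intercalate via $2u=0$, and elements of larger order, to realise relations demanding that some difference satisfy $2v\ne0$; for a PLS of size only $5$ these demands never collide. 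The tight order $n=6$, where only $\Z_6$ is available and there is little room, I would dispatch by a direct check of the finitely many size-$5$ classes.

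The main obstacle will be the even upper bound in the case $n\equiv0\mod 6$. Because all abelian groups of order $n$ share the element orders guaranteed by Cauchy's theorem yet differ in their Sylow structure, the obstructing size-$6$ PLS cannot rely on a single torsion relation (each such relation is satisfiable once $6\mid n$) nor on any Sylow-dependent feature; it must be inconsistent in every abelian group of order $n$ simultaneously while remaining embeddable in a nonabelian one---precisely the phenomenon separating $\ab(n)=5$ from $\kee(n)=6$. Constructing such a configuration and verifying both halves of this behaviour is the crux; by comparison the matching lower bound, though needing a finite classification, is routine once the embedding-by-genericity argument is in place.
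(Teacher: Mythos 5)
Your plan matches the paper's architecture in most residue classes --- $C_2$ for odd $n$, $C_3$ for $n\equiv2,4\mod6$, a complete-mapping obstruction at $n=6$, and a finite classification of the critical-size species for the lower bounds --- but it has a genuine gap at precisely the point you yourself call the crux. For $n\equiv0\mod6$ with $n>6$ you need a PLS of size $6$ that embeds in no abelian group of order $n$, and you do not produce one; you only describe the properties it should have. Without it, the upper bound $\ab(n)\le5$ is unproved for an infinite family of orders, so the theorem is not established. The paper exhibits the required configuration explicitly, namely
\[
\left(
\begin{array}{ccc}
a&b&\cdot \\
c&\cdot&b \\
\cdot&c&d
\end{array}
\right)
\]
from \eref{e:nonab}, and the verification is short: normalising the first row and first column to $0$ (so $a=0$), and writing the remaining row values as $c,y$ and the remaining column values as $b,x$, the repeated symbols force $c+x=b$ and $y+b=c$; adding gives $x+y=0$, whence $d=y+x=0=a$, contradicting the injectivity of $I_3$. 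Note that the inconsistency is two \emph{symbols} being forced to coincide, not two columns as you predicted; and since this PLS embeds in no abelian group of \emph{any} order, it alone gives $\ab(n)\le5$ for all even $n>4$, subsuming your separate treatments of $n=6$ and $n\equiv2,4\mod6$ (those obstacles are still needed in the paper, but for $\kee$, $\cyc$ and the classification of obstacles, not for this theorem). The paper also checks the companion fact that this PLS does embed in dihedral groups, which is what keeps $\kee(n)=6$ for $6\mid n$, $n>6$ --- exactly the ``two halves'' you anticipated but did not carry out.

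A secondary weakness is the lower bound. ``Choosing free values generically'' does not handle the size-$5$ PLS with little freedom: $T_5$ (a diagonal of five distinct symbols) requires a partial transversal of length $5$ in the Cayley table, which the paper obtains from \tref{t:trans} (Brouwer--de Vries--Wieringa, Woolbright), and a size-$4$ intercalate inside a size-$5$ PLS requires an element of order $2$, i.e.\ Sylow's theorem, not genericity. The paper formalises the generic step as two counting lemmas (\lref{l:rulenoopt} and \lref{l:ruleshiftline}) whose hypotheses fail for small $n$; the residual cases at $n=6$ and $n=8$ are settled by computer screening of the species of size at most $6$ together with a handful of explicit embeddings. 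So your skeleton is sound, but both the decisive abelian obstacle and the non-generic embedding cases are missing, and the first of these is essential.
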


Narrowing the focus even further, let $\cyc(n)$ be
the largest number $m$ such that every
PLS $P$ of size $m$ embeds in the cyclic group $\Z_n$. We show that

\begin{theorem}\label{t:cycmain}
$\cyc(n)=\ab(n)$ for all positive integers $n$.
\end{theorem}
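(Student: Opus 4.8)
The plan is to establish the two inequalities $\cyc(n)\le\ab(n)$ and $\cyc(n)\ge\ab(n)$ separately, invoking \tref{t:abmain} only to supply the explicit values of $\ab(n)$. The inequality $\cyc(n)\le\ab(n)$ is immediate: since $\Z_n$ is itself an abelian group of order $n$, any PLS that embeds in $\Z_n$ embeds in some abelian group of order $n$. In particular every PLS of size $\cyc(n)$ embeds in some abelian group of order $n$, so $\ab(n)\ge\cyc(n)$ by the definition of $\ab$. (Equivalently, a smallest PLS failing to embed in \emph{every} abelian group of order $n$ a fortiori fails to embed in $\Z_n$.)

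The substance is the reverse inequality $\cyc(n)\ge\ab(n)$, namely that every PLS of size at most $\ab(n)$ embeds in $\Z_n$. By \tref{t:abmain} the value of $\ab(n)$ is always $1$, $2$, $3$ or $5$, so it suffices to treat four ranges: size $\le1$ when $n\le2$; size $\le2$ when $n=3$; size $\le3$ when $n=4$ or $n$ is odd and $n>3$; and size $\le5$ when $n$ is even and $n>4$. In each range I would first reduce to finitely many cases by classifying the relevant PLS up to the equivalence that preserves embeddability in $\Z_n$. This is the full main-class (paratopy) relation: independent relabelling of rows, columns and symbols is clearly harmless, and each conjugacy operation is too, since the defining relation $I_1(r)+I_2(c)=I_3(s)$ rewrites as $(-I_1(r))+I_3(s)=I_2(c)$ and so on, with $x\mapsto -x$ an automorphism of $\Z_n$. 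Thus for each size I need only exhibit a cyclic embedding of one representative per main class.

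For a given representative I would regard the cell equations $x_r+y_c=z_s$ as a linear system over $\Z_n$ in the unknowns $x_r=I_1(r)$, $y_c=I_2(c)$, $z_s=I_3(s)$. Introducing the filled cells one at a time, each new cell either brings in a previously unused row, column or symbol label (so its equation merely defines a fresh variable and adds freedom) or relates only existing labels and imposes a single linear condition. One then solves by free assignment and tunes the free parameters so that the three label-sets $\{x_r\}$, $\{y_c\}$, $\{z_s\}$ are each injective. For the three small ranges the systems are so sparse that generic choices satisfy injectivity whenever $n\ge4$, and the tiny orders $n\le3$ are dispatched by direct inspection; the parity of $n$ plays no role there.

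The main obstacle is the size-$5$, even-$n>4$ range, both because there are many more main classes to enumerate and because some of them carry genuine additive structure: configurations forcing a relation such as $2a=2b$ with $a\ne b$, or a partial order-$2$ sublatin pattern, whose solvability in a cyclic group is precisely what distinguishes even from odd orders. Here I would exploit that $n$ is even with $n\ge6$, using the unique involution $n/2\in\Z_n$ to realise the forced order-$2$ coincidences while the remaining elements leave enough room to keep all three label-sets injective. I expect the difficulty to lie not in any single clever construction but in organising the case analysis so that injectivity is verified uniformly for every even $n>4$: I would dispose in one stroke of the configurations whose every filled cell introduces a new label (which always embed with room to spare) and then handle the finitely many configurations with cyclic dependencies among their labels individually, checking in each that an embedding into $\Z_n$ survives for all even $n>4$.
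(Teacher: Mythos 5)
Your skeleton matches the paper's: $\cyc(n)\le\ab(n)$ is immediate, reduction to one representative per species is legitimate for $\Z_n$ (this is \lref{l:specinvar}, and your conjugacy argument via $x\mapsto -x$ is correct for abelian groups), and the substance is indeed showing that every PLS of size at most $\ab(n)$ embeds in $\Z_n$, with the size-$5$, even-$n>4$ range carrying all the difficulty. The gap is that this range is never actually dealt with: the $1+2+5+18+59=85$ species of size at most $5$ are not processed, and the uniform principle you propose to make most of them routine --- that a configuration in which every filled cell introduces a new label ``always embeds with room to spare'' --- fails exactly where the work lies. The PLS $T_5$ from \eref{e:compmap} is such a configuration (each cell $(i,i,i)$ brings a fresh row, column and symbol), yet embedding it in $\Z_n$ is the problem of finding a partial transversal of length $5$ in the Cayley table of $\Z_n$: greedily, after four cells are placed, the label of the fifth column must avoid up to $4+4=8$ elements, so generic counting only succeeds for $n\ge9$ and says nothing for the required orders $n=6$ and $n=8$. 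That no ``room to spare'' argument can settle such diagonal configurations is shown by $T_n$ itself, which also has all labels new at every cell, yet embeds in no group of order $n$ when $n\equiv2\mod4$ (no complete mapping); embeddability of diagonal PLS is a genuine size-versus-order question, not a generic one.

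This is precisely why the paper needs two inputs your proposal lacks: \tref{t:trans} (the Brouwer--de Vries--Wieringa/Woolbright partial transversal theorem), which disposes of $T_5$ for $n\ge8$, and direct computation in $\Z_6$ (two independent computations are cited) for the remaining order. Your dichotomy --- ``all new labels'' handled in one stroke, ``cyclic dependencies'' handled individually using the involution $n/2$ --- files $T_5$ in the easy bucket, where it does not belong, and the involution is irrelevant to it. To close the gap you must either import \tref{t:trans} (or prove its $t=5$ case by hand) together with explicit embeddings at the small even orders, or genuinely execute the case analysis at $n=6$ and $n=8$ where generic choices fail. With that added, your argument becomes essentially the paper's own: the paper also proves the lower bound by screening species with Lemmas \ref{l:rulenoopt} and \ref{l:ruleshiftline} (a rigorous version of your ``free assignment subject to injectivity'' step) and then treats the handful of survivors --- $C_2$, $T_5$ and \eref{e:noninterc} --- by separate special arguments.
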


Note that it is immediate from the definitions that 
$\cyc(n)\le\ab(n)\le\kee(n)$ for all $n$.

In our investigations we will repeatedly use the observation that
pre- and post-multiplication allow us, without loss of generality,
to specify one row to be mapped to the identity by $I_1$ and
one column to be mapped to the identity by $I_2$ (see Lemma 2 in
\cite{CW09}). We will also often find it convenient to consider
the symbols in a PLS to be group elements (put another way, 
we may treat $I_3$ as the identity map).


We will use $\id$ to denote the identity element of a group, except
that when we know the group is abelian we will use additive notation
with $0$ as the identity.

\section{Upper bounds}

We first show that $\kee(n)$ never exceeds the values
quoted in \tref{t:main}. To do this we construct, for each $n$,
a PLS of size $\kee(n)+1$ that cannot be embedded into any
group of order $n$.

A \emph{quasigroup} $Q$ is a nonempty set with one binary operation
such that for every $a$, $b\in Q$ there is a unique $x\in Q$ and a
unique $y\in Q$ satisfying $ax=b=ya$.  The definition of an
embedding for a PLS converts without change from groups to
quasigroups.  The quasigroup analogue of finding $\kee(n)$ was the
subject of a famous conjecture known as the Evans conjecture. It is
now a theorem \cite{AH83,Sme81}.

\begin{theorem}\label{t:evansconj}
  Each PLS of size at most $n-1$ can be embedded into some quasigroup
  of order $n$. For each $n>1$ there is a PLS of size $n$ that cannot
  be embedded into any quasigroup of order $n$.
\end{theorem}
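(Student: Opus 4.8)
The plan is to treat the two assertions separately, since the second (sharpness) is elementary while the first is the full strength of the Evans conjecture. For the sharpness statement I would exhibit an explicit blocked symbol. Fix distinct symbols $a,b$ and take the PLS $P$ of order $n$ whose filled cells are $(i,i)=a$ for $1\le i\le n-1$ together with $(n,n)=b$; this is a valid PLS of size $n$ with every row and column used. Suppose $P$ embedded in a quasigroup $Q$ of order $n$ via injections $I_1,I_2,I_3$. Since $P$ has $n$ rows and $n$ columns, $I_1$ and $I_2$ are bijections onto $Q$. The first $n-1$ cells give $I_1(i)I_2(i)=I_3(a)$ for $i<n$, so in the Cayley table of $Q$ the symbol $I_3(a)$ occupies the $n-1$ distinct rows $I_1(1),\dots,I_1(n-1)$ and the $n-1$ distinct columns $I_2(1),\dots,I_2(n-1)$. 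In any quasigroup each symbol occurs exactly once per row and per column, so the remaining occurrence of $I_3(a)$ is forced into the one unused row $I_1(n)$ and the one unused column $I_2(n)$; that is, $I_1(n)I_2(n)=I_3(a)$. But the cell $(n,n)$ demands $I_1(n)I_2(n)=I_3(b)$ with $I_3(b)\neq I_3(a)$, a contradiction. Hence $P$ has no embedding, which proves the second sentence for every $n>1$.

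For the first sentence I would argue by induction on $n$, following Smetaniuk. The aim of the reduction step is to peel off a single symbol and drop the order by one. Since $P$ has at most $n-1$ filled cells, some row, some column and some symbol are unused; permuting lines lets me take row $n$ and column $n$ empty, and by further permuting rows and columns to align the occurrences of a suitably chosen symbol I can arrange, after relabelling, that one designated symbol $n$ meets the array only in cells of the main diagonal. The point of this arrangement is that deleting symbol $n$ leaves a partial Latin square on the first $n-1$ rows and columns that the inductive hypothesis completes to a genuine Latin square $L$ of order $n-1$; it then remains to re-insert the symbol $n$ while growing the order from $n-1$ to $n$.

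The re-insertion is the crux, and is where I expect the real difficulty to lie. The engine is Smetaniuk's bordering construction: given a Latin square $L$ of order $n-1$, one adjoins a new row, a new column and the new symbol $n$ so that $n$ occupies the prescribed diagonal set, the symbols it displaces are routed into the new line by a chain of local swaps, and the result is a Latin square of order $n$ containing the original data. Verifying that these swaps always close up consistently, so that every new row, every new column and the diagonal simultaneously remain Latin, is the technical heart of the whole argument and must be carried out with care; the naive count alone does not force the reduction through, which is exactly why the bordering lemma is needed rather than a bare deletion. This is precisely the content established in \cite{Sme81}, with the independent route of \cite{AH83} supplying an alternative via the edge-colouring of bipartite graphs. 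Granting this lemma, the induction closes and every PLS of size at most $n-1$ embeds in a quasigroup of order $n$.
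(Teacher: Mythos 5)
Your proposal is correct and takes essentially the same approach as the paper: both treat the first statement as the known Evans conjecture, deferring its proof to \cite{AH83,Sme81}, and both prove the second statement by exhibiting an explicit size-$n$ obstacle. Indeed, your diagonal example is just the column--symbol parastrophe of the paper's $\ev_{n,1}$ from \eref{e:evanstight}, so it is the same obstacle up to species; only the verification (your counting of the occurrences of $I_3(a)$ versus the paper's observation that no symbol remains available for cell $(1,n)$) differs cosmetically.
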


The difficult part of \tref{t:evansconj} is the first statement.
Examples that show the second statement are fairly obvious. For 
$1\le a<n$ a PLS $\ev_{n,a}$ of size $n$ can be constructed as follows:
\begin{gather}\label{e:evanstight}
\begin{aligned}
\ev_{n,a}(1,i)&=i\text{ for }1\le i\le a,\\
\ev_{n,a}(i,n)&=i\text{ for }a<i\le n.
\end{aligned}
\end{gather}
Clearly, none of the symbols $1,2,\dots,n$ is available to fill the
cell $(1,n)$, so $\ev_{n,a}$ cannot be embedded in a quasigroup of order
$n$. By \cite{AH83,Dam83}, this is essentially the only way to build a PLS
of size $n$ that cannot be embedded in a quasigroup of order $n$.
An immediate consequence of these examples is:

\begin{corollary}\label{cy:psilessn}
$\kee(n)<n$ for $n>1$.
\end{corollary}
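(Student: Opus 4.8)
The plan is to produce, for each $n>1$, a single PLS of size $n$ that embeds into no group of order $n$; the inequality $\kee(n)<n$ then follows at once from the definition of $\kee$.

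The one observation I would rely on is that every group is in particular a quasigroup, so any embedding of a PLS into a group of order $n$ is simultaneously an embedding into a quasigroup of order $n$. Taking the contrapositive, a PLS that cannot be embedded into any quasigroup of order $n$ cannot be embedded into any group of order $n$ either.

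With this in hand I would simply invoke the explicit examples already constructed. Fix any $a$ with $1\le a<n$. The PLS $\ev_{n,a}$ of \eref{e:evanstight} has exactly $n$ filled cells, and, as noted immediately after \tref{t:evansconj}, each of the symbols $1,\dots,n$ already occurs in row $1$ or in column $n$, so the cell $(1,n)$ cannot be filled; hence $\ev_{n,a}$ embeds into no quasigroup of order $n$, and therefore into no group of order $n$. Thus there is a PLS of size $n$ that embeds into no group of order $n$, so the size $m=n$ fails the condition defining $\kee(n)$, giving $\kee(n)<n$.

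I expect no genuine obstacle, as the statement is essentially immediate from \tref{t:evansconj}. The only point worth a line is the tacit fact that the sizes $m$ for which \emph{every} PLS of size $m$ embeds form an initial segment of the positive integers, so that failure at size $n$ really does cap $\kee(n)$ at $n-1$; I would justify this by observing that any non-embeddable PLS can be enlarged to any larger prescribed size by adjoining filled cells in fresh rows, columns and symbols (which never creates a conflict), and that a PLS containing a non-embeddable sub-PLS is itself non-embeddable, since any embedding would restrict to one of the sub-PLS.
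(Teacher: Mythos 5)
Your proposal is correct and follows essentially the same route as the paper: it uses the examples $\ev_{n,a}$ from \eref{e:evanstight}, which cannot embed in any quasigroup (hence any group) of order $n$ because every symbol is blocked from the cell $(1,n)$. Your extra remark about padding a non-embeddable PLS to larger sizes is a careful touch the paper leaves implicit, but it does not change the argument.
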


There is another way in which \cyref{cy:psilessn} can be derived for
certain values of $n$, which again connects to important prior work. A
{\em complete mapping} for a group $G$ is a permutation $\phi$ of the
elements of $G$ such that the map $x\mapsto x\phi(x)$ is also a
permutation of the elements of $G$. See \cite{DKI,transurvey} for
context, including a proof that no group of order $n\equiv2\mod4$ has
a complete mapping.  This implies for $n\equiv2\mod4$ that
$\kee(n)<n$, since no group can have an embedding of the PLS $T_{n}$
of size $n$, where
\begin{equation}\label{e:compmap}
T_{n}(i,i)=i\text{ for }i=1,\dots,n.
\end{equation}
Later we need the following special case of a theorem by 
Brouwer \etal{BVW78} and Woolbright~\cite{Woolbright}.

\begin{theorem}\label{t:trans}
$T_t$ embeds in any group of order $n$, provided $t\le \lceil n-\sqrt n\rceil$.
\end{theorem}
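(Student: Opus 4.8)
The plan is to reinterpret the embedding in the language of transversals. The Cayley table of a group $G$ of order $n$ is a Latin square of order $n$ whose entry in the row indexed by $g$ and column indexed by $h$ is the product $gh$. A \emph{partial transversal} of length $t$ in such a square is a set of $t$ cells, no two of which share a row, a column, or a symbol. First I would check that embedding $T_t$ is the same thing as exhibiting a partial transversal of length $t$: given cells $(r_i,c_i)$ with distinct symbols $s_i=r_ic_i$ for $1\le i\le t$, set $I_1(i)=r_i$, $I_2(i)=c_i$ and $I_3(i)=s_i$. The defining relation $I_1(i)I_2(i)=I_3(i)$ then holds by construction, while the injectivity of $I_1$, $I_2$ and $I_3$ is exactly the requirement that the rows, columns and symbols of the partial transversal be pairwise distinct. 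The converse is identical, so the two notions coincide.

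With this dictionary in place, the theorem is equivalent to the statement that every Latin square of order $n$ contains a partial transversal of length at least $n-\sqrt n$. This is precisely the special case of the cited results of Brouwer \etal{BVW78} and Woolbright~\cite{Woolbright} that we require. Because the length of a partial transversal is an integer, the bound may be rounded up to $\lceil n-\sqrt n\rceil$, which gives the stated range for $t$.

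The genuine mathematical content---proving the $n-\sqrt n$ lower bound for partial transversals of an arbitrary Latin square---is the hard part, but it is supplied by the cited theorem and is not reproved here. I therefore expect the argument to consist of little more than the translation above, together with the remark that a Cayley table is a Latin square; the only points needing care are the direction of the equivalence and the bookkeeping of injectivity.
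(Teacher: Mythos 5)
Your proposal is correct and matches the paper's treatment: the paper offers no proof of this statement at all, simply presenting it as the special case (for Cayley tables) of the cited transversal theorems of Brouwer \emph{et al.}\ and Woolbright, and your row/column/symbol dictionary between embeddings of $T_t$ and partial transversals of length $t$ is exactly the routine translation that the paper leaves implicit. The only nitpick is wording: the theorem is not \emph{equivalent} to the general Latin-square statement but merely follows from its restriction to Cayley tables, as you yourself note in the next clause.
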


Next we show:

\begin{lemma}\label{l:lcyc}
For each $\ell\ge2$ there exists a PLS of size $2\ell$ that
can only be embedded in groups whose order is divisible by $\ell$.
\end{lemma}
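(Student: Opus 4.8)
The plan is to exhibit an explicit PLS whose every embedding is forced to contain a group element of order exactly $\ell$; Lagrange's theorem then delivers the divisibility. I would take $\ell$ rows $r_0,\dots,r_{\ell-1}$, only two columns $c_0,c_1$, and $\ell$ symbols $x_0,\dots,x_{\ell-1}$, and fill $(r_i,c_0)=x_i$ and $(r_i,c_1)=x_{i+1}$ with subscripts read modulo $\ell$. Column $c_0$ then carries $x_0,\dots,x_{\ell-1}$ and column $c_1$ carries $x_1,\dots,x_{\ell-1},x_0$, so each column has distinct symbols; row $r_i$ carries the two distinct symbols $x_i,x_{i+1}$ (distinct because $\ell\ge2$). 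Every row and column is used, and the size is exactly $2\ell$, as required.

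For the forcing argument, suppose $P$ embeds into a group $G$ and write $\rho_i=I_1(r_i)$, $\gamma_j=I_2(c_j)$ and $\sigma_i=I_3(x_i)$. The two families of filled cells give $\rho_i\gamma_0=\sigma_i$ and $\rho_i\gamma_1=\sigma_{i+1}$ for all $i$ (indices mod $\ell$). Eliminating $\rho_i$ yields $\sigma_{i+1}=\sigma_i t$ where $t=\gamma_0^{-1}\gamma_1$, whence $\sigma_i=\sigma_0 t^i$; taking $i=\ell$ and using $\sigma_\ell=\sigma_0$ forces $t^\ell=\id$. On its own this only shows that the order of $t$ divides $\ell$.

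The point that upgrades ``divides'' to ``equals'' — and the only place any care is needed — is the injectivity of $I_3$. Since $\sigma_0,\dots,\sigma_{\ell-1}$ are pairwise distinct and equal to $\sigma_0t^0,\dots,\sigma_0t^{\ell-1}$, the powers $t^0,\dots,t^{\ell-1}$ are pairwise distinct; together with $t^\ell=\id$ this says that $\langle t\rangle$ has exactly $\ell$ elements. Hence $G$ contains a cyclic subgroup of order $\ell$, so $\ell\mid|G|$ by Lagrange, which is precisely the claim. I would close by noting that the construction is not vacuous: sending $t$ to a generator of $\Z_\ell$ embeds $P$, so the admissible orders are exactly the multiples of $\ell$. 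The genuinely creative step is writing down the cyclic two-column pattern; everything after that is routine, and the only subtlety to keep in mind is that one must invoke the distinctness of the symbols (not merely $t^\ell=\id$) to pin the order of $t$ down to $\ell$.
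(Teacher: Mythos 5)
Your proof is correct and takes essentially the same approach as the paper: your PLS is just the transpose of the paper's row cycle $C_\ell$, and your forcing argument is the same in substance. The explicit computation you give --- showing $t=\gamma_0^{-1}\gamma_1$ has order exactly $\ell$ via the injectivity of $I_3$, then invoking Lagrange --- is precisely the detail the paper compresses into its appeal to the regular representation in Cayley's theorem.
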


\begin{proof}
Consider the PLS
\[
C_\ell=
\left(
\begin{array}{ccccc}
a_1&a_2&\cdots&a_{\ell-1}&a_\ell\\
a_2&a_3&\cdots&a_{\ell}&a_1
\end{array}
\right)
\]
which is often known as a {\em row cycle} in the literature 
(see e.g.~\cite{Wan99,cycsw}). Suppose that $C_\ell$ is embedded in rows 
indexed $r_1$ and $r_2$ of the Cayley table of a group $G$.
From the regular representation of $G$ as used in Cayley's theorem, it
follows that $r_1^{-1}r_2$ has order $\ell$ in $G$. In particular
$\ell$ divides the order of $G$.
\end{proof}

Finally, we exhibit a constant upper bound.

\begin{lemma}\label{l:atmost6}
$\kee(n)\le6$ for all $n$.
\end{lemma}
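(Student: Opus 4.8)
The plan is to exhibit, for every $n$, a single PLS of size $7$ that cannot be embedded in any group of order $n$, which immediately gives $\kee(n)\le 6$. The natural candidate is a small rigid configuration built from the row-cycle idea of Lemma~\ref{l:lcyc}, since that lemma already forces divisibility constraints on $|G|$ very cheaply. My first move is therefore to take a short row cycle $C_\ell$ (costing $2\ell$ filled cells) and augment it with just enough extra filled cells to create an arithmetic contradiction for every conceivable order $n$.

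The key observation I would exploit is that a row cycle $C_\ell$ forces $r_1^{-1}r_2$ to be an element of order exactly $\ell$, so two overlapping or interacting row cycles of coprime lengths can force the group to simultaneously contain elements of two incompatible orders, or force a product of such elements to behave badly. Concretely, I would try to overlay a $2$-step structure: use a row cycle of length $2$ (which is just a pair of cells realizing an involution $t=r_1^{-1}r_2$ with $t^2=\id$) together with a length-$3$ row cycle sharing rows or symbols, so that the ambient subgroup generated is forced to have order divisible by $6$. Then a few more cells can be added to block the $n\equiv 0\bmod 6$ case as well, closing off the one residue class where the pure divisibility argument leaves room. The guiding arithmetic is exactly the case split in Theorem~\ref{t:main}: the value jumps to $6$ only when $6\mid n$, so the extremal blocking PLS must defeat even those orders, and size $7$ should suffice uniformly.

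The cleanest route is probably not to find one universal $7$-cell PLS by hand but to split into cases on $n\bmod 6$ and in each case produce a size-$7$ obstruction, then observe that the worst case (size needed) is $7$ throughout, yielding the uniform bound $\kee(n)\le 6$. For $n$ not divisible by $6$ I expect a size-$6$ obstruction (consistent with $\kee(n)\le 5$ there), built from a length-$3$ row cycle plus an extra cell whose symbol is forced to collide, exploiting that an order-$3$ element combined with one more filled cell overdetermines a Cayley-table position. For $6\mid n$ I would need the full size-$7$ configuration, combining involution and order-$3$ data so that the forced element $r_1^{-1}r_2$ would have to have an order not dividing $n$, or so that some cell $(r,c)$ is forced to receive a symbol already used in its row or column.

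The hard part will be verifying rigidity: once rows, columns and symbols are interpreted as group elements (using the normalization that one row and one column map to $\id$, as noted after Theorem~\ref{t:cycmain}), I must check that every attempted embedding is forced into a contradiction \emph{for all} groups of the relevant order, not merely cyclic or abelian ones. The row-cycle lemma handles the divisibility obstruction in a group-theoretically uniform way, so the real obstacle is controlling the interaction between two superimposed cycles in a \emph{nonabelian} group, where $r_1^{-1}r_2$ and the analogous generator from the second cycle need not commute. I would manage this by choosing the configuration so that the contradiction depends only on element orders and on a single product being forced to equal a specific symbol, quantities that are basis-independent and survive conjugation, rather than on any commutativity assumption.
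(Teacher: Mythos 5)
Your plan has a genuine gap, and it sits exactly where the whole difficulty of the lemma lies: the case $6\mid n$. Every obstruction you propose is built from row cycles and element orders, i.e.\ from divisibility constraints on $|G|$ in the spirit of \lref{l:lcyc}. But a PLS of size $7$ can only force elements of finitely many fixed orders (and cycles of length $\ell$ cost $2\ell$ cells, so only $\ell\in\{2,3\}$ are even affordable), and for infinitely many $n$ divisible by $6$ a group of order $n$ genuinely contains elements of order $1,2,3$ and $6$ in abundance. So superimposing a $2$-cycle and a $3$-cycle cannot produce a contradiction there: the configuration simply embeds. Your proposal acknowledges this residue class must be ``blocked'' by ``a few more cells,'' but gives no mechanism for doing so, and no mechanism of the order-forcing kind can exist, since any contradiction of the form ``$r_1^{-1}r_2$ has order not dividing $n$'' evaporates once $n$ has enough divisors. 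The Evans-type alternative you gesture at (a cell forced to receive a repeated symbol by exhausting all symbols) needs size $n$, not size $7$, so it also fails for large $n$.

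The idea you are missing is an obstruction that is independent of the order of the group altogether: the \emph{quadrangle criterion}, which is a consequence of associativity. The paper's proof is one line: the two PLS of size $7$ in \eref{e:quadcrit} each fail the quadrangle criterion, hence embed in \emph{no} group of \emph{any} order, which gives $\kee(n)\le6$ uniformly. Concretely, for the first PLS in \eref{e:quadcrit}, writing the row and column labels as group elements, the entries $a,b,c$ in the upper $2\times3$ corner force $r_3c_3=a$ by a short associativity computation, contradicting the placement of the distinct symbol $d$ there; no appeal to element orders, commutativity or the value of $n$ is needed. Your instinct that the contradiction should ``depend only on quantities that survive conjugation'' is pointing in the right direction, but orders of elements are the wrong invariant here; the right one is the associativity-forced equality of products, which is what the quadrangle criterion packages.
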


\begin{proof}
The following pair of PLS of order 7
\begin{equation}\label{e:quadcrit}
\left(
\begin{array}{ccc}
a&b&\cdot \\
c&a&b \\
\cdot&c&d
\end{array}
\right)
\qquad
\left(
\begin{array}{ccc}
a&b&\cdot \\
c&a&b \\
\cdot&d&a
\end{array}
\right)
\end{equation}
each fail the so-called quadrangle criterion \cite{DKI} and hence
neither can be embedded into any group.
\end{proof}

\lref{l:lcyc} for $\ell\in\{2,3\}$ combined with \lref{l:atmost6} and
\cyref{cy:psilessn} gives the upper bounds on $\kee(n)$ 
that we set out to prove.


\section{Proof of the main results}

Let $\sym_n$ denote the symmetric group of degree $n$.  It can be
convenient to view a PLS as a set of {\em triples} of the form
$(r,c,s)$ which record that symbol $s$ occupies cell $(r,c)$. When
viewed in this way, there is a natural action, called {\em parastrophy},
of $\sym_3$ on PLS where the triples are permuted uniformly.  There is
also a natural action of $\sym_n\wr\sym_3$ on the set of PLS of order $n$.
Orbits under this action are known as {\em species} (the term
{\em main classes} is also used). See \cite{tradenum} for full
details.

To prove \tref{t:main} it remains to show that every PLS of size no
more than the claimed value of $\kee(n)$ can indeed be embedded in
some group of order $n$. We also aim to identify every species of PLS
of size $\kee(n)+1$ which cannot be embedded into any group of order
$n$.  We refer to such a PLS as an {\em obstacle for $\kee(n)$}. 
Obstacles for $\ab(n)$ and $\cyc(n)$ are defined similarly.  
By our work in the previous section we know that all
obstacles have size at most 7. A catalogue of species representatives
for all PLS of size at most 7 is simple to generate (it is much simpler
task than the enumeration in \cite{tradenum}, though some programs
from that enumeration were reused in the present work). 
The number of species involved is
shown in the following table:
\begin{equation}\label{e:numPLS}
\begin{array}{|c|ccccccc|}
\hline
\text{size}&1&2&3&4&5&6&7\\
\text{\#species}&1&2&5&18&59&306&1861\\
\hline
\end{array}
\end{equation}

It suffices to consider one representative of each species because of the
following Lemma. We omit the proof since it is identical to that
of Lemma 1 in \cite{CW09}, and an easy consequence of 
Theorem 4.2.2 in \cite{DKI}.

\begin{lemma}\label{l:specinvar}
Let $G$ be an arbitrary group and $P,P'$ two PLS from the 
same species. Then $P$ embeds in $G$ if and only if $P'$ embeds in $G$.
\end{lemma}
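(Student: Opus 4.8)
The plan is to show that embeddability in a fixed group $G$ is invariant under each generator of the species action, and then invoke the fact that species are precisely the orbits of $\sym_n\wr\sym_3$. The group $\sym_n\wr\sym_3$ is generated by the \emph{relabelings} (its base group $\sym_n\times\sym_n\times\sym_n$, acting independently on the row, column and symbol labels) together with the \emph{parastrophies} (the top group $\sym_3$, permuting the three coordinate roles). Once embeddability-in-$G$ is known to be preserved by every generator, it is preserved by the whole group, and since $P$ and $P'$ lie in a common orbit each embeds in $G$ exactly when the other does. By symmetry it suffices throughout to prove one direction, namely that if $P$ embeds then so does its image under each generator.

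First I would dispose of the relabelings. Suppose $P'$ arises from $P$ by permuting rows by $\alpha$, columns by $\beta$ and symbols by $\gamma$, so that $(r,c,s)$ is a triple of $P$ exactly when $(\alpha r,\beta c,\gamma s)$ is a triple of $P'$. Given an embedding $I_1,I_2,I_3$ of $P$, the maps $I_1\alpha^{-1}$, $I_2\beta^{-1}$ and $I_3\gamma^{-1}$ are injective and furnish an embedding of $P'$, since the defining identity $I_1(r)I_2(c)=I_3(s)$ is merely re-indexed. This part is routine.

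The substantive step is parastrophy, where the roles of rows, columns and symbols are interchanged; here the relation $I_1(r)I_2(c)=I_3(s)$ must be re-solved for the new roles, and in a non-abelian group this forces the use of inverses. Since $(1\,2)$ and $(2\,3)$ generate $\sym_3$, I would treat only these two. For the transpose $(1\,2)$, inverting $I_1(r)I_2(c)=I_3(s)$ yields $I_2(c)^{-1}I_1(r)^{-1}=I_3(s)^{-1}$, so the assignments $c\mapsto I_2(c)^{-1}$, $r\mapsto I_1(r)^{-1}$, $s\mapsto I_3(s)^{-1}$ (each injective, as inversion is a bijection of $G$) embed $P^{(1\,2)}$. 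For $(2\,3)$, rearranging the relation as $I_1(r)^{-1}I_3(s)=I_2(c)$ shows that $r\mapsto I_1(r)^{-1}$, $s\mapsto I_3(s)$, $c\mapsto I_2(c)$ embed $P^{(2\,3)}$.

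The only real obstacle is the non-abelian parastrophy bookkeeping just described: one must check that for each coordinate permutation the group identity can be rearranged, using associativity and the bijectivity of $x\mapsto x^{-1}$, into a product of two of the (possibly inverted) images equal to the third. Having handled the generators $(1\,2)$ and $(2\,3)$, the remaining parastrophies follow automatically by composition, and combining the two generator types gives invariance under all of $\sym_n\wr\sym_3$, which completes the proof.
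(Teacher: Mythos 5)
Your proposal is correct: the relabeling step is routine, and your parastrophy bookkeeping (using $x\mapsto x^{-1}$ to rewrite $I_1(r)I_2(c)=I_3(s)$ for the transposed roles) is exactly the group-theoretic content needed, with injectivity preserved throughout. The paper omits its proof, deferring to Lemma 1 of \cite{CW09} and Theorem 4.2.2 of \cite{DKI}, but the argument underlying those citations is precisely the one you give --- that the parastrophes of a group's Cayley table are isotopic to it via inversion --- so your route is essentially the same, just written out in full.
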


To find $\kee(n)$, $\ab(n)$ and $\cyc(n)$ we wish to identify the
smallest PLS that cannot be embedded into any group (resp.\ abelian
group, cyclic group) of order $n$.  The following two lemmas allow us
to eliminate many candidates from consideration.

We use $\bullet$ to denote an arbitrary row, column or symbol
(possibly a different one each time that $\bullet$ appears).

\begin{lemma}\label{l:rulenoopt}
Let $G$ be a group of order $n$ and $P$ a PLS of order at most $n$. 
If $P$ contains a triple $(r,c,s)$
with the following properties:
\begin{enumerate}
\item[\rm (i)]
$P':=P\setminus\{(r,c,s)\}$ has no triple $(r,\bullet,\bullet)$,
\item[\rm (ii)]
for each triple $(r',c',s')$ in $P'$ there is a triple
$(r',c,\bullet)$ or $(r',\bullet,s)$ in $P'$,
\end{enumerate}
then $P$ can be embedded in $G$ if $P'$
can be embedded in $G$.
\end{lemma}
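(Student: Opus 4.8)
The plan is to extend any embedding of $P'$ to one of $P$ by choosing an image for the single new row. By hypothesis (i) the triple $(r,c,s)$ is the only one of $P$ lying in row $r$, so $r$ does not occur in $P'$; thus, given injective maps $I_1,I_2,I_3$ embedding $P'$ in $G$, the maps $I_2$ and $I_3$ already account for all columns and symbols of $P$ except possibly $c$ and $s$, and essentially only $I_1(r)$ must be defined afresh. The value forced by the relation $I_1(r)I_2(c)=I_3(s)$ coming from $(r,c,s)$ is
\[
I_1(r)=I_3(s)\,I_2(c)^{-1},
\]
and this is what I would assign.

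First I would settle the degenerate configurations. If both $c$ and $s$ are absent from $P'$, then hypothesis (ii)---which for every triple of $P'$ demands a matching triple in column $c$ or carrying symbol $s$---forces $P'$ to be empty, whence $P=\{(r,c,s)\}$ embeds trivially. If exactly one of $c,s$ is absent, then (ii) forces \emph{every} row of $P'$ to meet the other one: absence of $s$ makes every row of $P'$ meet column $c$, and absence of $c$ makes every row meet symbol $s$. I would then give the missing one of $I_2(c),I_3(s)$ a value not yet used by the relevant map; this is possible because $P$ has order at most $n=|G|$, so the missing column $c$ (resp.\ symbol $s$) is one of at most $n$ columns (resp.\ symbols) of $P$ while $P'$ uses at most $n-1$ of them, leaving a free group element. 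With both $I_2(c)$ and $I_3(s)$ defined, I set $I_1(r)$ by the displayed formula.

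The crux, and the step I expect to be the main obstacle, is checking that $I_1$ remains injective, i.e.\ that $I_1(r)\neq I_1(r')$ for every row $r'$ occurring in $P'$. Here I would feed hypothesis (ii) against the latin property of $P$. Each such $r'$ carries a triple $(r',c,s'')$ or $(r',c'',s)$. Suppose $I_1(r)=I_1(r')$. In the first case the relations $I_1(r')I_2(c)=I_3(s'')$ and $I_1(r)I_2(c)=I_3(s)$ give $I_3(s)=I_3(s'')$, hence $s=s''$; in the second case cancelling $I_1(r')$ from $I_1(r')I_2(c)=I_3(s)=I_1(r')I_2(c'')$ gives $c=c''$. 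Either way a triple $(r',c,s)$ lies in $P'$, which together with $(r,c,s)\in P$ places symbol $s$ twice in column $c$ and contradicts $P$ being a PLS. (In the cases where $I_3(s)$ or $I_2(c)$ was freshly chosen to be unused, the same collision is excluded even more directly, since a newly assigned image cannot coincide with one already used in $P'$.) Hence no collision occurs, $I_1$ stays injective, and $(I_1,I_2,I_3)$ embeds $P$. The only remaining bookkeeping is to confirm that the freely chosen images in the degenerate cases genuinely avoid all earlier values, which follows from the order bound above.
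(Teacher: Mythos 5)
Your proposal is correct and follows essentially the same route as the paper's proof: assign $I_1(r)=I_3(s)I_2(c)^{-1}$ after giving $I_2(c)$ and $I_3(s)$ fresh values if needed (justified by $P$ having order at most $n$), then use condition (ii) together with the latin property of $P$ to show a collision $I_1(r)=I_1(r')$ would force the impossible pair of triples $(r',c,s)$ and $(r,c,s)$. Your explicit treatment of the degenerate cases where $c$ or $s$ is absent from $P'$ is just a more verbose version of the paper's uniform handling, so there is nothing substantively different.
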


\begin{proof}
Consider an embedding $(I_1,I_2,I_3)$ of $P'$ in $G$.
By condition (i) we know that $I_1(r)$ will not be defined.
If $I_2(c)$ is undefined then $P'$ must have fewer that $n$ columns
so we may simply define $I_2(c)$ to be any
element of $G\setminus\{I_2(y):(x,y,z)\in P'\}$. 
If $I_3(s)$ is undefined then we choose its value similarly.
Then we define $I_1(r)=I_3(s)I_2(c)^{-1}$. 
By construction, both $I_2$ and $I_3$ are injective.
So at this point, the only thing which
could prevent us having an embedding of $P$ in $G$ would be if 
$I_1(r)=I_1(r')$ for some $(r',c',s')\in P'$. Suppose this is the case.
Then by condition (ii) there is a triple 
$(r',c,\bullet)$ or $(r',\bullet,s)$ in $P'$. First suppose that
$(r',c,s'')$ is in $P'$ for some symbol $s''$.
Then $I_3(s'')=I_1(r')I_2(c)=I_1(r)I_2(c)=I_3(s)$ so
$s''=s$. But this means $(r',c,s)$ and $(r,c,s)$ are distinct triples
in $P$, which is impossible. So it must be that $(r',c'',s)$ is in $P'$ 
for some $c''$. But this means that
$I_2(c'')=I_1(r')^{-1}I_3(s)=I_1(r)^{-1}I_3(s)=I_2(c)$. Thus $c=c''$,
which leads to the same contradiction as before.
\end{proof}

\begin{lemma}\label{l:ruleshiftline}
Let $G$ be a group of order $n$.
Let $P$ be a PLS with rows $R$, columns $C$ and symbols $S$.
Suppose that $P$ contains triples 
$(r,c_1,s_1),\dots,(r,c_\ell,s_\ell)$
with no other triples in row $r$. Let
$P'=P\setminus\{(r,c_i,s_i):1\le i\le\ell\}$ and let $R'$, $C'$ and $S'$
be the rows, columns and symbols of $P'$, respectively.
Let $C_1=\{1\le i\le\ell:c_i\in C'\}$ and
$S_1=\{1\le i\le\ell:s_i\in S'\}$.
If
\begin{enumerate}
\item[\rm (i)] $C_1\cap S_1=\emptyset$,
\item[\rm (ii)] $n\ge |R|+|C_1|(|S'|-1)+|S_1|(|C'|-1)$, and
\item[\rm (iii)] $n\ge |C|+|S|-\ell$,
\end{enumerate}
then $P$ can be embedded in $G$ if $P'$
can be embedded in $G$.
\end{lemma}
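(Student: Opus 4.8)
The plan is to start from an embedding $(I_1,I_2,I_3)$ of $P'$ and to extend it to all of $P$ by supplying the value $g:=I_1(r)$ (undefined so far, since $R'=R\setminus\{r\}$) together with the images on the columns and symbols of row $r$ that are not yet determined. Condition~(i) guarantees that for each index $i$ at most one of $I_2(c_i),I_3(s_i)$ is already defined: for $i\in C_1$ the column image $I_2(c_i)$ is known, for $i\in S_1$ the symbol image $I_3(s_i)$ is known, and for the remaining ``free'' indices neither is known. The relation $gI_2(c_i)=I_3(s_i)$ then \emph{forces} $I_3(s_i)=gI_2(c_i)$ when $i\in C_1$ and $I_2(c_i)=g^{-1}I_3(s_i)$ when $i\in S_1$, while for a free index we may choose $I_2(c_i)$ freely and set $I_3(s_i)=gI_2(c_i)$.

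First I would pick $g$ so that the forced data cause no collisions. The element $g$ must avoid (a) the $|R|-1$ row images $I_1(R')$; (b) for each $i\in C_1$, the $|S'|$ values carrying $gI_2(c_i)$ onto an existing symbol image, i.e.\ the set $B_i=I_3(S')\,I_2(c_i)^{-1}$; and (c) for each $i\in S_1$, the $|C'|$ values carrying $g^{-1}I_3(s_i)$ onto an existing column image, i.e.\ $D_i=I_3(s_i)\,I_2(C')^{-1}$. Distinctness among the new symbol images, and among the new column images, is automatic, since the $c_i$ and the $s_i$ are pairwise distinct and $x\mapsto gx$ is a bijection. The crucial observation is that each $B_i$ and each $D_i$ already meets $I_1(R')$: because $c_i\in C'$ there is a triple $(r',c_i,s')$ of $P'$, whence $I_3(s')I_2(c_i)^{-1}=I_1(r')\in I_1(R')\cap B_i$, and symmetrically for $D_i$ using a triple of $P'$ that contains $s_i$. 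Subadditivity then bounds the number of forbidden $g$ by $(|R|-1)+|C_1|(|S'|-1)+|S_1|(|C'|-1)$, which is less than $n$ by~(ii), so a good $g$ exists.

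It then remains to place the free indices. For each free $i$ I must choose $I_2(c_i)=x$ avoiding the column images already assigned and with $gx$ avoiding the symbol images already assigned; since $x\mapsto gx$ is a bijection, it suffices to take the $x$ distinct and outside $X\cup g^{-1}Y$, where $X$ collects the column images already fixed (those on $C'$ and those forced for $S_1$) and $Y$ the symbol images already fixed (those on $S'$ and those forced for $C_1$). The same type of coincidence now helps: the forced columns $I_2(c_i)$ for $i\in S_1$ lie in $X$ and, as $I_3(s_i)\in Y$, also in $g^{-1}Y$; and the columns $I_2(c_i)$ for $i\in C_1$ lie in $I_2(C')\subseteq X$ and, as $gI_2(c_i)=I_3(s_i)\in Y$, also in $g^{-1}Y$. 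These $|C_1|+|S_1|$ elements are distinct, so $|X\cap g^{-1}Y|\ge|C_1|+|S_1|$ and hence $|X\cup g^{-1}Y|\le|C'|+|S'|$. Thus at least $n-|C'|-|S'|$ values are available; rewriting~(iii) as $n\ge|C'|+|S'|+f$ with $f=\ell-|C_1|-|S_1|$ the number of free indices, this is at least $f$, so the free indices can be assigned one by one, completing an embedding of $P$.

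The hard part is spotting the two coincidences that sharpen the counting. Naively one only obtains the weaker estimates $|C_1||S'|+|S_1||C'|$ for the forbidden values of $g$ and $|C'|+|S'|+\ell$ for the room needed by the free indices; it is precisely the fact that every column indexed by $C_1$ and every symbol indexed by $S_1$ already occurs in $P'$ — each contributing a shared element to $I_1(R')$ in the first count and to $X\cap g^{-1}Y$ in the second — that recovers the exact bounds in~(ii) and~(iii). Condition~(i) is what makes the forced data well defined and non-contradictory: without it an index in $C_1\cap S_1$ would pin $g$ to a single value, and two such indices could conflict.
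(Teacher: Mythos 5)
Your proposal is correct and follows essentially the same route as the paper's proof: choose $I_1(r)$ outside the forbidden set, using the key observation that each translate set $I_3(S')I_2(c_i)^{-1}$ (for $i\in C_1$) and $I_3(s_i)I_2(C')^{-1}$ (for $i\in S_1$) already meets $I_1(R')$ to match bound (ii), then make the forced assignments for $C_1$ and $S_1$, and finally place the free indices in $G\setminus\big(I_2(C')\cup I_1(r)^{-1}I_3(S')\big)$, which is exactly your set $X\cup g^{-1}Y$, using (iii) for the count. Your inclusion--exclusion argument for $|X\cup g^{-1}Y|\le |C'|+|S'|$ is just a more explicit rendering of what the paper leaves as ``routine to check.''
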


\begin{proof}
Consider an embedding $(I_1,I_2,I_3)$ of $P'$ in $G$.
By definition, $P'$ contains at least one triple $(\bullet,c_i,\bullet)$
for each $i\in C_1$.
Likewise, $P'$ contains at 
least one triple $(\bullet,\bullet,s_i)$ for each $i\in S_1$.
Hence the constraint (ii) ensures that 
we can choose a value for $I_1(r)\in G$ outside of the set 
\[
\big\{I_1(r'):(r',c',s')\in P'\big\}
\bigcup\big\{I_3(s)I_2(c_i)^{-1}:i\in C_1,s\in S'\big\}
\bigcup\big\{I_3(s_i)I_2(c)^{-1}:i\in S_1,c\in C'\big\}.
\]
The injectivity of $I_3$ is immediate.
By condition (i) we are free to
define $I_2(c_i)=I_1(r)^{-1}I_3(s_i)$ for $i\in S_1$ and to
define $I_3(s_i)=I_1(r)I_2(c_i)$ for $i\in C_1$.

There are at least $n-(|C'|+|S'|)=n-\big(|C|-(\ell-|C_1|)+|S|-(\ell-|S_1|)\big)$
elements of
\begin{equation}\label{e:badfl}
G\setminus\Big(
\big\{I_2(c'):(r',c',s')\in P'\big\}
\bigcup\big\{I_1(r)^{-1}I_3(s):s\in S'\big\}
\Big)
\end{equation}
So by condition (iii) there are at least $\ell-|C_1|-|S_1|$ elements
of this set. This gives us the option, for 
$i\in\{1,\dots,\ell\}\setminus(C_1\cup S_1)$,  
to choose distinct values for $I_2(c_i)$ in \eref{e:badfl} and put
$I_3(s_i)=I_1(r)I_2(c_i)$. It is routine to check that this yields
an embedding of $P$ in $G$.
\end{proof}

For ease of expression we have stated Lemmas \ref{l:rulenoopt} and
\ref{l:ruleshiftline} in a form that breaks the symmetry between rows,
columns and symbols.  This loses some generality, but we can get it
back by applying the lemmas to each PLS in an orbit under parastrophy. We
will do this without further comment when invoking these lemmas.

The fact that $\cyc(1)=\ab(1)=\kee(1)=1$ is a triviality. The fact
that $\ab(n)=\kee(n)=n-1$ for $n\in\{2,3,4\}$ follows from
\tref{t:evansconj} and the fact that for these orders every species of
quasigroup contains an abelian group. Moreover, for $n\in\{2,3,4\}$
the obstacles for $\ab(n)$ and $\kee(n)$ are precisely those
characterised in \cite{AH83,Dam83}.  Specifically, $\ev_{n,a}$ as
given in \eref{e:evanstight} is an obstacle, and every obstacle
belongs to the species of some $\ev_{n,a}$. However, $\ev_{n,a}$ is
from the same species as $\ev_{n,n-a}$. Thus there are only $\lfloor
n/2\rfloor$ species described by \eref{e:evanstight}.  It is not hard
to check that $\cyc(n)=\ab(n)$ when $n\in\{2,3,4\}$.  The obstacles
for $\cyc(n)$ are the same as for $\ab(n)$ except that there is one
extra obstacle when $n=4$, namely $T_4$ from \eref{e:compmap}.
Henceforth, we may assume that $n\ge 5$.

Let $G$ be any group of order $n\ge5$ and let $P$ be a PLS of size at most
$3$. Lemmas \ref{l:rulenoopt} and \ref{l:ruleshiftline} together show that $P$
cannot be the smallest PLS that does not embed in $G$. It then follows
from \lref{l:lcyc} that $\cyc(n)=\ab(n)=\kee(n)=3$ for all odd $n\ge 5$. 
To confirm the uniqueness of $C_2$ as an obstacle 
we screened the PLS of size $4$ with Lemmas \ref{l:rulenoopt} and 
\ref{l:ruleshiftline}. Most were eliminated immediately (for some,
a quick manual check that they embed in $\Z_5$ was required because
the lemmas only applied for $n\ge7$). 
Apart from $C_2$, the only candidate left standing was:
\begin{equation}\label{e:noninterc}
\left(
\begin{array}{ccc}
a&b&\cdot\\
\cdot&a&b
\end{array}
\right).
\end{equation}
However, this PLS can be embedded in any group
which has an element of order more than 2, which is to say, any group
other than an elementary abelian 2-group, as shown by:
\[
\begin{array}{c|ccc}
&\id&b&b^2\\
\hline
\id&\id&b&\cdot\\
b^{-1}&\cdot&\id&b
\end{array}
\]
Having completed the odd case, from now on we assume that $n$ is even.

Let us next consider the case $n=6$. Two independent computations
confirm that, of the species of PLS of size at most
$6$, only six species do not embed into $\Z_6$. The six species all
have size $6$ and thus these six species are the obstacles for 
$\cyc(6)=\ab(6)=5$. They include the 3 species $\ev_{6,1}$, $\ev_{6,2}$,
$\ev_{6,3}$ from \eref{e:evanstight} and the one species $T_6$ from
\eref{e:compmap}.  Representatives of the other two species are:
\begin{gather}
\left(
\begin{array}{cccccc}
a&\cdot&\cdot&\cdot&\cdot&c \\
\cdot&a&\cdot&\cdot&b&\cdot \\
\cdot&\cdot&b&c&\cdot&\cdot
\end{array}
\right)\label{e:interesting}\\
\left(
\begin{array}{ccc}
a&b&\cdot \\
c&\cdot&b \\
\cdot&c&d
\end{array}
\right)\label{e:nonab}
\end{gather}
These two species deserve individual scrutiny:

\begin{lemma}
The PLS in \eref{e:interesting} does not embed into any group of order~$6$.
\end{lemma}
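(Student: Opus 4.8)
The plan is to exploit a parity invariant arising from the fact that every group of order $6$ has a subgroup of index $2$. First I would record the triple structure of the PLS in \eref{e:interesting}: its six triples are $(1,1,a)$, $(1,6,c)$, $(2,2,a)$, $(2,5,b)$, $(3,3,b)$ and $(3,4,c)$. The feature I would isolate is combinatorial: each of the three rows occurs in exactly two triples and each of the three symbols occurs in exactly two triples, while the six columns are pairwise distinct and hence, under any embedding, their images exhaust $G$.

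Next I would produce a homomorphism to argue with. By Cauchy's theorem any group $G$ of order $6$ contains an element of order $3$; the subgroup it generates has index $2$ and is therefore normal, so there is a surjective homomorphism $\pi\colon G\to\Z_2$ whose kernel has three elements. Consequently $\pi$ sends exactly three elements of $G$ to each residue, so $\sum_{g\in G}\pi(g)\equiv1$ in $\Z_2$. I would then suppose, for contradiction, that \eref{e:interesting} embeds via $(I_1,I_2,I_3)$ and apply $\pi$ to each defining relation $I_1(r)I_2(c)=I_3(s)$, obtaining $\pi\bigl(I_1(r)\bigr)+\pi\bigl(I_2(c)\bigr)=\pi\bigl(I_3(s)\bigr)$ in $\Z_2$. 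Summing over all six triples, the row terms contribute $2\bigl(\pi(I_1(1))+\pi(I_1(2))+\pi(I_1(3))\bigr)\equiv0$ and the symbol terms contribute $2\bigl(\pi(I_3(a))+\pi(I_3(b))+\pi(I_3(c))\bigr)\equiv0$, precisely because each row and each symbol appears twice. Since $I_2$ is an injection of six columns into the six-element group $G$ it is a bijection, so the column terms sum to $\sum_{g\in G}\pi(g)\equiv1$. Thus the left side totals $1$ and the right side totals $0$, a contradiction in $\Z_2$.

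The only genuinely delicate point is locating the right invariant. Once one notices that rows and symbols each appear an even number of times while the columns exhaust $G$, the index-$2$ quotient turns this observation into the parity clash above, and the remainder is a one-line computation in $\Z_2$. I therefore expect no substantive obstacle beyond recognising that both groups of order $6$ supply the map $\pi$ (uniformly, that an order-$3$ subgroup automatically has index $2$); a brute-force check over $\Z_6$ and $S_3$ would also settle the claim, but the parity argument is cleaner and makes transparent \emph{why} the embedding fails.
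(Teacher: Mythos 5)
Your proof is correct, and it takes a genuinely different route from the paper's. The paper argues by direct case analysis: after normalizing so that $I_3$ is the identity and $I_1(1)=I_2(1)=a=\id$, it notes that $b$ must have order $2$, $3$ or $6$, and eliminates each case by a short computation inside the relevant group of order $6$. Your argument instead isolates a parity invariant: every group $G$ of order $6$ has a subgroup of index $2$ (Cauchy gives an order-$3$ element, and an index-$2$ subgroup is automatically normal), hence a surjection $\pi\colon G\to\Z_2$; summing $\pi\bigl(I_1(r)\bigr)+\pi\bigl(I_2(c)\bigr)=\pi\bigl(I_3(s)\bigr)$ over the six triples, the row and symbol contributions vanish because each of the three rows and each of the three symbols occurs in exactly two triples, while the column contributions sum to $\sum_{g\in G}\pi(g)\equiv 3\equiv 1\pmod 2$ because the six columns are forced to biject onto $G$; the resulting $1=0$ in $\Z_2$ is the contradiction. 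All the ingredients check out against the triple structure $(1,1,a)$, $(1,6,c)$, $(2,2,a)$, $(2,5,b)$, $(3,3,b)$, $(3,4,c)$, and the map $\pi$ exists uniformly for both $\Z_6$ and $\sym_3$. What your approach buys: it is uniform over both groups of order $6$, requires no computation inside either group, and makes visible that the obstruction is the same parity phenomenon underlying the nonexistence of complete mappings for groups of order $\equiv 2\pmod 4$, which the paper invokes separately in connection with $T_n$. What the paper's approach buys: it is entirely elementary (no quotient homomorphism needed), and its explicit partial computations show concretely where each attempted embedding breaks down, which is in the spirit of the paper's other case analyses.
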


\begin{proof}
  We assume to the contrary that $(I_1,I_2,I_3)$ embeds
  \eref{e:interesting} into a group $G$ of order $6$, where $I_3$ is the
  identity. Let $\gamma$
  satisfy $I_1(1)I_2(\gamma)=b$. It is clear that $\gamma\in\{2,4\}$.
  The two possible choices for $\gamma$ are equivalent under the
  row-permutation $(2\,3)$, column-permutation $(1\,6)(2\,4)(3\,5)$
  and symbol-permutation $(a\,c)$, so we may assume that $\gamma=2$.

  We may also assume that $I_1(1)=I_2(1)=a=\id$ from which it
  follows that $I_1(2)=b^{-1}$ and
  $b$ must have order~2,~3 or~6. 
  Clearly, $b$ does not have order~2 as this would imply that
  $I_1(2)I_2(1)=b=I_1(2)I_2(5)$, violating the injectivity of $I_2$.

If $b$ has order~3, then $b^2=b^{-1}$ and we have:
\[
\begin{array}{c|cccccc}
& \id&b & x & y & b^2 & c\\
\hline
\id&\id&b&\cdot&\cdot&\cdot&c \\
b^2&b^2&\id&\cdot&\cdot&b&\cdot \\
z&\cdot&\cdot&b&c&\cdot&\cdot
\end{array}
\]
where $G=\{\id,b,b^2,x,y,c\}$ and $z\in G\setminus\{\id,b^2,b,c\}$.
By eliminating other possibilities within the second row we see that
$b^2x=c$ and hence $b^2c=y$. However, this implies that $c=zy=zb^2c$
and thus $z=b$, which is not possible. Therefore $b$ cannot have
order~3.

If $b$ has order~6, then the group is abelian and we may assume that 
$b=1$ giving:
\[
\begin{array}{c|cccccc}
& 0&1 & x & y & 2 & c\\
\hline
0&0&1&\cdot&\cdot&\cdot&c \\
5&\cdot&0&\cdot&\cdot&1&\cdot \\
z&\cdot&\cdot&1&c&\cdot&\cdot
\end{array}
\]
with $\{x,y,c\}=\{3,4,5\}$ and $z\in\{1,2,3,4\}$.
Now, $z+x=1$ and $z+y=c$ so $c+x=y+1$, which has no
solution amongst the available values. 
Therefore $b$ cannot have order~6.
\end{proof}

\begin{lemma} 
The PLS in \eref{e:nonab} does not embed into any abelian group.
\end{lemma}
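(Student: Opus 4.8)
The plan is to assume for contradiction that the PLS in \eref{e:nonab} embeds into an abelian group $G$, writing it additively, and to extract linear equations among the images of the rows, columns and symbols. As usual I would normalise using the freedom noted in the introduction, taking $I_1(1)=I_2(1)=0$ so that $a=I_3(a)=0$. With this normalisation the filled cells give the relations
\[
\begin{aligned}
b&=I_2(2), & c&=I_1(2), & b&=I_1(2)+I_2(3),\\
c&=I_1(3)+I_2(2), & d&=I_1(3)+I_2(3).
\end{aligned}
\]
The second and third of these already force $I_2(3)=b-c$, and the fourth gives $I_1(3)=c-b$, i.e.\ $I_1(3)=-I_2(3)$.

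The key step is then to observe that in an \emph{abelian} group these linear constraints over-determine the symbol $d$. Substituting the derived values, $d=I_1(3)+I_2(3)=(c-b)+(b-c)=0=a$, so $d$ and $a$ are forced to coincide. This contradicts the fact that $a$ and $d$ are distinct symbols in the PLS (distinct symbols must receive distinct values under the injection $I_3$), completing the proof. The computation is short; the only thing to check carefully is that the normalisation is legitimate and that no other cell of \eref{e:nonab} imposes a competing constraint, but since every filled cell has been accounted for, this is routine.

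I expect the main (and only) obstacle to be purely expository rather than mathematical: making sure the reader sees \emph{why} commutativity is essential, since the very same PLS \emph{does} embed in nonabelian groups of order~$6$ (indeed \eref{e:nonab} appears among the obstacles for $\cyc(6)$ and $\ab(6)$ but is excluded from the general $\kee(6)$ obstacle list precisely because $S_3$ accommodates it). It may therefore be worth remarking that the contradiction $d=a$ arises from the identity $I_1(3)+I_2(3)=I_2(3)+I_1(3)$, which holds only when $G$ is abelian; in a nonabelian group the analogous cancellation fails and an embedding can be found. No appeal to the screening Lemmas \ref{l:rulenoopt} and \ref{l:ruleshiftline} is needed here, as the argument is a direct linear-algebra computation over $G$.
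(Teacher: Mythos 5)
Your proof is correct and is essentially identical to the paper's: both normalise the first row and column to $0$, read off the linear relations from the filled cells of \eref{e:nonab}, and use commutativity to conclude $d=I_1(3)+I_2(3)=0=a$, contradicting the injectivity of $I_3$. The paper merely writes $x=I_2(3)$, $y=I_1(3)$ and phrases the cancellation as $b=y+b+x$, so the two arguments differ only in notation.
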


\begin{proof}
  Assume to the contrary that $(I_1,I_2,I_3)$ embeds this PLS into an
  abelian group. We may assume that $I_1$ maps the rows 
  to $0$, $c$ and $y$, respectively, and $I_2$ maps the columns
  to $0$, $b$ and $x$, respectively.
\[
\begin{array}{c|ccc}
&0&b&x \\
\hline
0&0&b&\cdot \\
c&c&\cdot&b \\
y&\cdot&c&d
\end{array}
\]
We have $y+b=c$ and $c+x=b$ which imply that
$b=y+b+x$. Hence, $d=x+y=0$ which prevents $I_3$ from being injective.
Thus no embedding into an abelian group is possible.
\end{proof}

However, \eref{e:nonab} does embed into each dihedral group 
of order at least~6. Using the presentation
$D_{2k}=\langle r,s | r^k=s^2=\id, sr=r^{-1}s \rangle$, we find this embedding:
\[
\begin{array}{c|ccc}
& \id& rs & s\\
\hline
\id&\id&rs&\cdot \\
r&r&\cdot&rs \\
r^2s&\cdot&r&r^2
\end{array}
\]
In particular, we have established that $\kee(6)=5$ and
there are precisely 5 species
that are obstacles for $\kee(6)$, namely $\ev_{6,1}$, $\ev_{6,2}$,
$\ev_{6,3}$, $T_6$ and \eref{e:interesting}.

With the aid of \lref{l:ruleshiftline} we know that these 5 obstacles for
$\kee(6)$ can be embedded into every group of order at 
least $11$. It follows that $\cyc(n)=\ab(n)=5$ and $\kee(n)=6$ whenever 
$n\equiv0\mod6$ and $n>6$. In this case, the species of the 
PLS in \eref{e:nonab} is the unique obstacle for $\cyc(n)$ and $\ab(n)$.
Characterising the obstacles for $\kee(n)$ requires more
work. Using Lemmas \ref{l:rulenoopt} and \ref{l:ruleshiftline}
and \tref{t:trans} we immediately eliminate all but 50 of the PLS
of size 7. Of these 50 PLS, 42 embed in $\Z_6$ and hence are not obstacles.
Let $\Omega$ be the set of the remaining 8 PLS.
There are 6 PLS in $\Omega$ that contain a PLS from the species
represented by \eref{e:nonab}, 
which explains why they do not embed in $\Z_6$. Two of these 6
are the obstacles that we know from \eref{e:quadcrit}, and the other 4
all embed in $D_6$ and hence are not obstacles. 
That leaves just two PLS in $\Omega$ that we
have not discussed. One of them is
\begin{equation}\label{e:overlapinterc}
\left(
\begin{array}{ccc}
a&b&c\\
b&a&\cdot\\
c&\cdot&a
\end{array}
\right)
\end{equation}
which can embed in any group that has more than one element of order
$2$. In particular, it embeds in $D_{2k}$ for any $k\ge2$ which means
it is not an obstacle for $\kee(2k)$.
The final PLS in $\Omega$ is
\begin{equation}\label{e:order4}
\left(
\begin{array}{ccc}
a&b&c\\
b&c&\cdot\\
c&\cdot&a
\end{array}
\right).
\end{equation}
Suppose $(I_1,I_2,I_3)$ is an embedding of this PLS in a group $G$.
We may assume that $I_1$ maps the rows to $\id,b,c$ respectively, and $I_2$
maps the columns to $\id,b,c$ respectively. It then follows that $b$ is an
element of order $4$. We conclude that $G$ cannot have order $2\mod 4$.
Conversely, it is clear that \eref{e:order4} embeds in $\Z_4$ and hence 
into $\Z_n$ for any $n$ divisible by $4$.
In conclusion, we know that for $n$ divisible by 12 the only obstacles
for $\kee(n)$ are the two species given in \eref{e:quadcrit}. 
When $n=12k+6$ for an integer $k>1$,
there are 3 obstacles as given in \eref{e:quadcrit} and \eref{e:order4}.

It remains to consider orders $n\ge8$ which are divisible by 2 but not by 3.
For such orders, \lref{l:lcyc} shows that $\cyc(n)\le\ab(n)\le\kee(n)\le5$, 
Let $G$ be a group of order $n$. Screening the PLS of size up to $5$
using Lemmas \ref{l:rulenoopt} and \ref{l:ruleshiftline}, we found
only three candidates for the smallest PLS that does not embed in
$G$. The first was $C_2$, which can embed in $G$ by Sylow's Theorem.
The second was $T_5$ from \eref{e:compmap},
which can embed in $G$ by \tref{t:trans}.
The third was the PLS \eref{e:noninterc}, which can be embedded in any 
cyclic group of order greater than $2$, so $\cyc(n)=\ab(n)=\kee(n)=5$.

To find the obstacles for $\kee(n)$, $\ab(n)$ and $\cyc(n)$ 
for $n\equiv2,4\mod6$ we proceed as before.
Using Lemmas \ref{l:rulenoopt} and \ref{l:ruleshiftline}
and \tref{t:trans} we eliminated all but 11 of the PLS
of size 6 (for 11 others we needed to find an embedding in $\Z_8$,
whilst the lemmas took care of all larger groups). The 11 
remaining candidates for obstacles included $C_3$ which we know is an
obstacle for $\kee(n)$, $\ab(n)$ and $\cyc(n)$ and \eref{e:nonab},
which we know is an
obstacle for $\ab(n)$ and $\cyc(n)$ but not for $\kee(n)$.

The remaining 9 PLS can be embedded into cyclic groups of any order at
least~6 and hence are not obstacles for $\kee(n)$, $\ab(n)$ or
$\cyc(n)$. The claimed embeddings for these PLS are shown in the
following, with each PLS embedding as per the bordered table to its
right:
\[
\begin{array}{lll}
\left(
\begin{array}{cccc}
a&b&\cdot&\cdot \\
\cdot&a&\cdot&\cdot \\
\cdot&\cdot&a&b \\
\cdot&\cdot&b&\cdot
\end{array}
\right)
\qquad
\left(
\begin{array}{cccc}
a&b&\cdot&\cdot \\
\cdot&a&c&\cdot \\
\cdot&\cdot&a&b 
\end{array}
\right)
\qquad
\left(
\begin{array}{ccc}
a&b&\cdot \\
\cdot&a&c \\
d&\cdot&a 
\end{array}
\right)
&\phantom{somespace}&
\begin{array}{c|cccc}
& 0&-1 & -3 & -4 \\
\hline
0&0&-1&\cdot&\cdot \\
1&\cdot&0&-2&\cdot \\
3&3&\cdot&0&-1 \\
2 & \cdot&\cdot&-1&\cdot
\end{array}
\\[9ex]
\left(
\begin{array}{cccc}
a&c&\cdot&\cdot \\
\cdot&a&\cdot&b \\
\cdot&\cdot&b&c
\end{array}
\right)
\qquad
\left(
\begin{array}{cccc}
a&\cdot&\cdot&c \\
\cdot&a&\cdot&b \\
c&\cdot&b&\cdot 
\end{array}
\right)
&\phantom{somespace}&
\begin{array}{c|cccc}
& 0&2 & -2 & 1 \\
\hline
0&0&2&\cdot&1 \\
-2&\cdot&0&\cdot&-1 \\
1&1&\cdot&-1&2 
\end{array}
\\[8ex]
\left(
\begin{array}{cccc}
a&b&\cdot&\cdot \\
\cdot&a&b&\cdot \\
\cdot&\cdot&a&b
\end{array}
\right)
\qquad
\left(
\begin{array}{ccc}
a&b&\cdot \\
\cdot&a&b \\
c&\cdot&a 
\end{array}
\right)
&\phantom{somespace}&
\begin{array}{c|cccc}
& 0&-1 & -2 & -3 \\
\hline
0&0&-1&\cdot&\cdot \\
1&\cdot&0&-1&\cdot \\
2&2&\cdot&0&-1 
\end{array}
\end{array}
\]
\[
\begin{array}{lllllll}
\left(
\begin{array}{ccc}
a&b&\cdot \\
\cdot&a&c \\
c&\cdot&b
\end{array}
\right)
&\phantom{space}&
\begin{array}{c|ccc}
& 0&-2 &  -3 \\
\hline
0&0&-2&\cdot \\
2&\cdot&0&1 \\
1&1&\cdot&-2 
\end{array}
&\phantom{somspace}&
\left(
\begin{array}{ccc}
a&\cdot&c \\
\cdot&a&b \\
b&c&\cdot 
\end{array}
\right)
&\phantom{space}&
\begin{array}{c|ccc}
& 0&-1 & 1 \\
\hline
0&0&\cdot&1 \\
1&\cdot&0&2 \\
2&2&1&\cdot 
\end{array}
\end{array}
\]
This completes the proofs of \tref{t:main}, \tref{t:abmain} and
\tref{t:cycmain} and the characterisation of all obstacles.

\section{Concluding remarks}

In the process of answering Problem 3.8 from \cite{DKI} we have solved
three related problems. Namely, we have found, for each order $n$, the
smallest partial latin square that cannot be embedded into (i) any
group of order $n$, (ii) any abelian group of order $n$ and (iii) the
cyclic group of order $n$. We have also identified the unique species
of the smallest PLS that cannot be embedded into an abelian group of
any order, namely \eref{e:nonab}. And we found the two species of PLS
which share the honour of being the smallest that cannot embed into
any group, namely \eref{e:quadcrit}. As a byproduct of our
investigations, we can also be sure that \eref{e:overlapinterc}
represents the unique species of smallest PLS that can embed into some
abelian group but not into any cyclic group. Similar questions had
previously been answered for an important restricted class of PLS
known as a {\em separated, connected latin trades}. Let $\chi$
denote the set of such PLS.  In \cite{CW09} it was found that the
smallest PLS in $\chi$ to not embed in any group has size 11, the
smallest PLS in $\chi$ to embed in some group but not into any
abelian group has size 14, while the smallest PLS in $\chi$ to embed
in some abelian group but not into any cyclic group has size 10.

We close with a discussion of what seems to be an interesting special
case of embedding PLS in groups. By a {\em diagonal PLS} we will mean
a PLS in which cells off the main diagonal are empty. We have already
seen an example in \eref{e:compmap} and, modulo parastrophy,
$\ev_{n,1}$ from \eref{e:evanstight} is another example. The question
of which groups have an embedding for $T_n$ has proved a particularly
deep and fruitful line of enquiry.  It seems that asking the same
question for other diagonal PLS might yield some interesting
results. Equivalently, we may ask the following question for each
given group $G$ of order $n$ and partition $\Pi$ of $n$. Is it
possible to find a permutation $\pi$ of $G$ such that the
multiplicities of the elements of $G$ in the multiset
$\{g\cdot\pi(g):g\in G\}$ form the partition $\Pi$? A theorem of
Hall \cite{Hal52} characterises the possible multisets 
$\{g\cdot\pi(g):g\in G\}$ when $G$ is
abelian. We offer the following extra result as a ``teaser''.

\begin{theorem}
  Let $\Delta$ be the diagonal PLS of size $n$ with $\Delta(i,i)=a$
  for $i\le3$ and $\Delta(i,i)=b$ for $4\le i\le n$. Then $\Delta$ has
  an embedding into a group $G$ of order $n$ if and only if $n$ is
  divisible by $3$.
\end{theorem}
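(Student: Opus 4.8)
The plan is to recast the existence of an embedding as a statement about permutations of $G$, which is exactly the ``teaser'' formulation proposed just above. Since $\Delta$ has order $n=|G|$, any embedding $(I_1,I_2,I_3)$ must send the $n$ rows and the $n$ columns bijectively onto $G$. Putting $\pi=I_2\circ I_1^{-1}$, a permutation of $G$, and writing $\alpha=I_3(a)$ and $\beta=I_3(b)$ (so $\alpha\ne\beta$), the embedding condition $I_1(i)I_2(i)=I_3(s_i)$ becomes precisely that $g\pi(g)=\alpha$ for the three group elements $g\in I_1(\{1,2,3\})$ and $g\pi(g)=\beta$ for the remaining $n-3$ elements. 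Conversely any permutation $\pi$ and distinct pair $\alpha,\beta$ with these multiplicities yields an embedding. So $\Delta$ embeds in $G$ if and only if some permutation $\pi$ makes the multiset $\{g\pi(g):g\in G\}$ consist of $\alpha$ with multiplicity $3$ and $\beta$ with multiplicity $n-3$.

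For necessity I would set $A=\{g:g\pi(g)=\alpha\}$ and $B=G\setminus A$, so that $|A|=3$ and $|B|=n-3$, and let $t=\beta\alpha^{-1}\ne\id$. On $A$ we have $\pi(g)=g^{-1}\alpha$ and on $B$ we have $\pi(g)=g^{-1}\beta$, hence $\pi(A)=A^{-1}\alpha$ and $\pi(B)=B^{-1}\beta$. Inverting the partition $A\sqcup B=G$ gives $A^{-1}\sqcup B^{-1}=G$, while $\pi$ being a bijection gives $A^{-1}\alpha\sqcup B^{-1}\beta=G$. Right-multiplying the latter by $\alpha^{-1}$ produces $A^{-1}\sqcup B^{-1}t=G$, and comparing this with $A^{-1}\sqcup B^{-1}=G$, by uniqueness of the complement of $A^{-1}$, forces $B^{-1}t=B^{-1}$. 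Thus $B^{-1}$ is invariant under right multiplication by $t$, so it is a union of left cosets of $\langle t\rangle$; therefore $\mathrm{ord}(t)$ divides $|B^{-1}|=n-3$. Since $\mathrm{ord}(t)$ also divides $n$ by Lagrange, it divides both, hence divides $3$; as $t\ne\id$ we get $\mathrm{ord}(t)=3$ and so $3\mid n$.

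For sufficiency, assume $3\mid n$. By Cauchy's theorem $G$ has an element $t$ of order $3$; set $H=\langle t\rangle$ and take $X$ to be a union of exactly $n/3-1$ of the $n/3$ left cosets of $H$, so that $|X|=n-3$ and $Xt=X$. I would then put $\alpha=\id$, $\beta=t$, $B=X^{-1}$, $A=G\setminus B$, and define $\pi(g)=g^{-1}$ for $g\in A$ and $\pi(g)=g^{-1}t$ for $g\in B$. A quick check gives $\pi(A)=G\setminus X$ and $\pi(B)=Xt=X$, so $\pi$ is a bijection, while $g\pi(g)$ equals $\id$ on $A$ and $t$ on $B$, delivering the multiplicities $3$ and $n-3$. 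Reading off $I_1$ and $I_2$ from any enumeration of $A$ followed by $B$, with $I_3(a)=\id$ and $I_3(b)=t$, recovers the embedding.

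The substance lies in two places: the reformulation via the multiset $\{g\pi(g)\}$, and the identity $B^{-1}t=B^{-1}$, which converts the combinatorial partition condition into the coset-divisibility constraint $\mathrm{ord}(t)\mid n-3$. I expect this identity to be the main obstacle, as it is the step that couples $n$ with $n-3$; once it is established, noting that $\mathrm{ord}(t)$ divides $\gcd(n,n-3)$, a divisor of $3$, together with Cauchy's theorem makes both directions routine. The only degenerate case worth a remark is $n=3$, where $X=\emptyset$ and $\Delta$ uses only the symbol $a$, embedding trivially.
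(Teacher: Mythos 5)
Your proposal is correct, and it takes a genuinely different route from the paper. The paper's own proof is essentially a two-line reduction to an external result: after the same normalisation you use (post-multiplying so that $I_3(b)=\id$), both the construction of an embedding when $3\mid n$ and the extraction of an element of order $3$ from a given embedding are obtained by citing Lemma~3.1 of \cite{DJW15}, with Lagrange's theorem finishing the necessity. You never invoke that lemma; you re-prove the special case you need from scratch. Your reformulation via the multiset $\{g\pi(g):g\in G\}$ is exactly the ``teaser'' framing the paper sets up just before the theorem, but the real content of your argument is the identity $B^{-1}t=B^{-1}$, forced by comparing the two partitions $A^{-1}\sqcup B^{-1}=G$ and $A^{-1}\sqcup B^{-1}t=G$; this converts the existence of an embedding into the coset-divisibility constraint $\mathrm{ord}(t)\mid n-3$, which together with $\mathrm{ord}(t)\mid n$ forces $\mathrm{ord}(t)=3$ and hence $3\mid n$. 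For sufficiency, Cauchy's theorem plus taking $X$ to be all but one left coset of $\langle t\rangle$ (so that $Xt=X$) replaces the citation, and your check that $\pi$ is a bijection with the right product multiset is sound, as is your treatment of the degenerate case $n=3$. What the paper's route buys is brevity and a pointer to a stronger lemma in the literature; what yours buys is a self-contained, elementary argument whose mechanism is fully visible, and which generalises with no extra work: replacing $3$ by $k$ throughout shows that the diagonal PLS with $k$ copies of $a$ and $n-k$ copies of $b$ embeds in a group $G$ of order $n$ if and only if $G$ has a non-identity element whose order divides $\gcd(k,n)$, which by Cauchy's theorem happens if and only if $\gcd(k,n)>1$ --- in effect recovering the content of the lemma the paper cites.
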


\begin{proof} 
First suppose that $3$ divides $n$ so that $G$ has an element $u$ of order
$3$. By \cite[Lem.~3.1]{DJW15}, there is an embedding $(I_1,I_2,I_3)$ of
$\Delta$ in $G$ with $I_3(a)=u$ and $I_3(b)=\id$.


Next suppose that $\Delta$ has an embedding $(I_1,I_2,I_3)$ in
$G$. By post-multiplying $I_3(a)$, $I_3(b)$ and $I_2(i)$ for $1\le i\le n$
by $I_3(b)^{-1}$, we may assume that
$I_3(b)=\id$. It then follows from \cite[Lem.~3.1]{DJW15} that $G$
contains an element of order $3$.  Hence $G$ has order divisible by
$3$, as required.
\end{proof}

The paper \cite{DJW15} looked at not just whether a diagonal PLS
can be embedded in a given quasigroup, but how many different ways each
such PLS can be embedded. It was shown that there are examples of
quasigroups $Q_1=(Q,\star)$ and $Q_2=(Q,\otimes)$ from different
species that cannot be distinguished by this information. That is,
for each diagonal PLS $D$ and each injection $I_3$ from the symbols of
$D$ to $Q$, there are the same number of embeddings $(I_1,I_2,I_3)$ of
$D$ in $Q_1$ as there are in $Q_2$. A question was posed 
in \cite{DJW15} whether this is possible when $Q_1$ is a group.

\subsection*{Acknowledgements}

We thank Rebecca Stones for independently confirming
the numbers in \eref{e:numPLS}.

 
  \let\oldthebibliography=\thebibliography
  \let\endoldthebibliography=\endthebibliography
  \renewenvironment{thebibliography}[1]{%
    \begin{oldthebibliography}{#1}%
      \setlength{\parskip}{0.2ex}%
      \setlength{\itemsep}{0.2ex}%
  }%
  {%
    \end{oldthebibliography}%
  }

\end{document}